\newtheorem{theorem}{Theorem}
\newtheorem{lemma}[theorem]{Lemma}
\newtheorem{corollary}[theorem]{Corollary}
\title{Finite groups determined by an inequality of the orders of their subgroups II}
\author{Marius T\u arn\u auceanu}
\date{October 25, 2016}
\begin{document}

\maketitle

\begin{abstract}
    In this note we study a class of finite groups for which the orders of subgroups satisfy a certain inequality.
    In particular, characterizations of the well-known groups $\mathbb{Z}_2\times\mathbb{Z}_2$ and $S_3$ are obtained.
\end{abstract}

{\small
\noindent
{\bf MSC2000\,:} Primary 20D60, 20D30; Secondary 20D15, 11A25.

\noindent
{\bf Key words\,:} finite groups, subgroup lattices, number of
subgroups, arithmetic functions.}

\section{Introduction}

Let $G$ be a finite group, $L(G)$ be the subgroup lattice of $G$ and
\[\sigma_1(G)=\sum_{H\in L(G)}\,\frac{|H|}{|G|}=\sum_{H\in L(G)}\,\frac{1}{|G:H|}\,.\]The starting point for our discussion is given by the paper \cite{1},
where the groups $G$ with $\sigma_1(G)\leq 2$ have been determined. Recall also several basic properties of the function $\sigma_1$:
\begin{itemize}
\item[-] if $G$ is cyclic of order $n$ and $\sigma(n)$ denotes the sum of all divisors of $n$, then $\sigma_1(G)=\frac{\sigma(n)}{n}$\,;
\item[-] $\sigma_1$ is multiplicative, i.e. if $G_i$, $i=1,2,\dots,m$, are finite groups of coprime orders, then $\sigma_1(\prod_{i=1}^m G_i)=\prod_{i=1}^m \sigma_1(G_i)$;
\item[-] $\sigma_1(G)\geq\sigma_1(G/H),\, \forall\, H\unlhd G$.
\end{itemize}

By refining the proof of Theorem 1 of \cite{1}, in the current paper we will determine the finite groups $G$ satisfying $\sigma_1(G)\leq 2+\frac{4}{|G|}$.
Our main result is the following.

\begin{theorem}\label{th:C1}
    Let $G$ be a finite group of order $n$. Then:
\begin{itemize}
\item[{\rm a)}] $\sigma_1(G)<2+\frac{4}{n}$ if and only if $G$ is cyclic and $\sigma(n)<2n+4$ or $G\cong\mathbb{Z}_2\times\mathbb{Z}_2$;
\item[{\rm b)}] $\sigma_1(G)=2+\frac{4}{n}$ if and only if $G$ is cyclic and $\sigma(n)=2n+4$ or $G\cong\mathbb{Z}_3\times\mathbb{Z}_3$ or $G\cong S_3$.
\end{itemize}
\end{theorem}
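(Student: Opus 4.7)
The \textbf{if} direction follows from direct computation. For cyclic $G$, $\sigma_1(G)=\sigma(n)/n$, so both inequalities translate verbatim. For the three exceptional groups, enumerating subgroups gives $\sigma_1(\mathbb{Z}_2\times\mathbb{Z}_2)=11/4<3=2+4/4$, $\sigma_1(\mathbb{Z}_3\times\mathbb{Z}_3)=22/9=2+4/9$, and $\sigma_1(S_3)=8/3=2+4/6$, confirming the strict and equality cases.

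For the \textbf{only if} direction, I would proceed by strong induction on $n=|G|$, assuming $G$ is non-cyclic with $\sigma_1(G)\leq 2+4/n$. The first ingredient is the quotient property: for any $N\trianglelefteq G$ with $1\neq N\neq G$, we have $\sigma_1(G/N)\leq\sigma_1(G)\leq 2+4/n$, which by induction forces $G/N$ to be cyclic or one of the three exceptions. Direct comparison of the exact values $\{11/4,22/9,8/3\}$ against $2+4/n$ shows that the exceptional alternatives all force $|N|=1$: for example $G/N\cong\mathbb{Z}_2\times\mathbb{Z}_2$ would demand $n\leq 16/3$, but $4\mid n$ and $|N|\geq 2$ give $n\geq 8$; similar computations rule out $\mathbb{Z}_3\times\mathbb{Z}_3$ and $S_3$. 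Thus every non-trivial proper quotient of $G$ is cyclic.

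The second ingredient combines multiplicativity with the bound $\sigma_1(C_m)\geq(m+1)/m$. If $G$ admits a coprime-order direct decomposition with a non-cyclic factor, that factor must absorb the slack $4/n$, typically forcing the complement to be trivial. Combined with Sylow theory, any non-cyclic Sylow $p$-subgroup $P$ contains $\mathbb{Z}_p\times\mathbb{Z}_p$, whose $\sigma_1$ equals $2+(p+1)/p^2$; comparison with $2+4/n$ yields $p\leq 3$ and $P\cong\mathbb{Z}_p\times\mathbb{Z}_p=G$, recovering $\mathbb{Z}_2\times\mathbb{Z}_2$ and $\mathbb{Z}_3\times\mathbb{Z}_3$.

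The hardest case is non-abelian $G$ whose order has at least two prime factors. Since every non-trivial proper quotient is cyclic, $G/G'$ is cyclic and $G'$ is the unique minimal normal subgroup, hence elementary abelian; combined with the Sylow constraints above, $G$ is forced to be metacyclic of the form $C_m\rtimes C_k$. One must then show that among all such semidirect products only $S_3$ satisfies $\sigma_1(G)\leq 2+4/n$. The main obstacle is producing a uniformly strong lower bound on subgroup counts in $C_m\rtimes C_k$, typically by counting orbits of non-normal subgroups under conjugation, to defeat the inequality in every case except $(m,k)=(3,2)$. Refining this subgroup-counting estimate beyond the one in \cite{1} is the technical heart of the proof, and explains the precise constant $4$ in the sharp bound.
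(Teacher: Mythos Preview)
Your ``if'' direction and first ingredient are sound: the quotient inequality plus induction does force every proper quotient of a non-cyclic $G$ with $\sigma_1(G)\le 2+4/n$ to be cyclic. From this the \emph{nilpotent} case actually collapses quickly (if $G$ is nilpotent non-abelian then $G/Z(G)$ is a non-cyclic proper quotient, contradiction; if abelian non-cyclic then $G$ surjects onto some $\mathbb{Z}_p\times\mathbb{Z}_p$, which must equal $G$ itself, and $p\le 3$ follows).

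The gap is the non-nilpotent case. Your reduction to ``$G'$ is the unique minimal normal subgroup, hence elementary abelian'' already needs more: a minimal normal subgroup could a priori be a product of non-abelian simple groups, and even when it is elementary abelian it need not be cyclic, so the jump to ``$G$ is metacyclic $C_m\rtimes C_k$'' is unjustified. Concretely, $A_4$ has every proper quotient cyclic and $A_4'=V_4$ elementary abelian of rank $2$; your outline does not exclude it before the final count. More seriously, you explicitly leave that final count undone, calling it ``the main obstacle'' and ``the technical heart'', so the proposal names a plan but does not execute its decisive step. (Your second ingredient also tacitly uses $\sigma_1(P)\le\sigma_1(G)$ for a Sylow subgroup $P$, which is only available once $P$ is known to be a direct factor, i.e.\ once nilpotence is established.)

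The paper bypasses all of this with a short direct estimate. If $G$ is not nilpotent, pick a non-normal maximal subgroup $M$; then $M=N_G(M)$, so $M$ has exactly $r=[G:M]$ conjugates $M_1,\dots,M_r$, contributing $r|M|=|G|$ to $\sum_{H\le G}|H|$. One checks $L(G)\neq\{1,G,M_1,\dots,M_r\}$, so there is a further subgroup $N$, and
\[
\sigma_1(G)\ \ge\ \frac{1+|G|+r|M|+|N|}{|G|}\ =\ 2+\frac{1+|N|}{|G|}\,.
\]
Thus $\sigma_1(G)\le 2+4/|G|$ forces $|N|\le 3$ and $L(G)=\{1,G,M_1,\dots,M_r,N\}$, and a two-line case analysis yields $G\cong S_3$ (only in the equality case). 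With nilpotence in hand, multiplicativity over the Sylow decomposition together with a $p$-group lemma (bounding $\sigma_1$ via the Frattini quotient $G/\Phi(G)$) finishes the proof, replacing your open-ended semidirect-product count by a single inequality.
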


Two nice characterizations of $\mathbb{Z}_2\times\mathbb{Z}_2$ and $S_3$ can be inferred from the above theorem.

\begin{corollary}
    $\mathbb{Z}_2\times\mathbb{Z}_2$ is the unique non-cyclic group $G$ satisfying $\sigma_1(G)<2+\frac{4}{|G|}$\,, while $S_3$
    is the unique non-abelian group $G$ satisfying $\sigma_1(G)=2+\frac{4}{|G|}$\,.
\end{corollary}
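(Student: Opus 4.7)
The corollary is immediate from Theorem~\ref{th:C1}; the plan is simply to filter the two classifications there by the structural condition appearing in each assertion, rather than redoing any of the underlying subgroup counting.

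For the first statement, I would invoke part (a) of Theorem~\ref{th:C1}: the groups $G$ satisfying $\sigma_1(G)<2+\frac{4}{|G|}$ are exactly the cyclic groups of order $n$ with $\sigma(n)<2n+4$ together with the Klein four-group $\mathbb{Z}_2\times\mathbb{Z}_2$. Imposing non-cyclicity discards the first family entirely, so $\mathbb{Z}_2\times\mathbb{Z}_2$ is the sole survivor. It is genuinely non-cyclic, and a quick check (already implicit in part (a)) confirms $\sigma_1(\mathbb{Z}_2\times\mathbb{Z}_2)=\frac{11}{4}<3=2+\frac{4}{4}$, so the characterization is sharp.

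For the second statement, I would use part (b): $\sigma_1(G)=2+\frac{4}{|G|}$ forces $G$ to be cyclic of order $n$ with $\sigma(n)=2n+4$, or $\mathbb{Z}_3\times\mathbb{Z}_3$, or $S_3$. Cyclic groups are abelian, $\mathbb{Z}_3\times\mathbb{Z}_3$ is abelian, and $S_3$ is non-abelian, so imposing non-commutativity isolates $S_3$. Equality for $S_3$ is, again, part of Theorem~\ref{th:C1}.

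There is no real obstacle once Theorem~\ref{th:C1} is in hand: the work of eliminating all other finite groups has already been done. The point of the corollary is rhetorical, namely to present the theorem in the succinct form of a characterization of two very familiar groups; the proof reduces entirely to a bookkeeping step separating the cyclic/abelian members of each classified family from the non-cyclic/non-abelian ones.
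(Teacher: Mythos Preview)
Your proposal is correct and matches the paper's approach exactly: the paper gives no separate proof of the corollary, stating only that it is ``inferred from the above theorem,'' and your argument is precisely that inference---filter the classification in Theorem~\ref{th:C1}(a) by non-cyclicity and the classification in Theorem~\ref{th:C1}(b) by non-abelianness.
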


By Theorem 1 of \cite{1}, a finite group $G$ with $\sigma_1(G)\leq 2$ is cyclic of deficient or perfect order. Also, in Lemma 4 below we will show
that a finite group $G$ with $\sigma_1(G)<2+\frac{4}{|G|}$ is always nilpotent. Inspired by these results, we came up with the following natural problem:
is there a constant $c\in(2,\infty)$ such that if $\sigma_1(G)<c$ then $G$ is nilpotent? The answer to this problem is negative, as shows our next theorem.

\begin{theorem}\label{th:C2}
    There are sequences of finite non-nilpotent groups $(G_n)_{n\in\mathbb{N}}$ such that $\sigma_1(G_n)\searrow 2$ for $n\rightarrow\infty$.
\end{theorem}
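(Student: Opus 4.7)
The plan is to exhibit non-abelian groups of square-free order $p_n q_n$ with both primes tending to infinity, exploiting the very small subgroup lattice of any group of order $pq$.

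First, pick a strictly increasing sequence of odd primes $(q_n)_{n\in\mathbb{N}}$. By Dirichlet's theorem on primes in arithmetic progressions, for each $n$ there exist infinitely many primes $p \equiv 1 \pmod{q_n}$; I would select one, $p_n$, large enough to force monotonicity later on. Set
\[
G_n := \mathbb{Z}_{p_n} \rtimes \mathbb{Z}_{q_n},
\]
where the action is given by any non-trivial homomorphism $\mathbb{Z}_{q_n} \to \mathrm{Aut}(\mathbb{Z}_{p_n}) \cong \mathbb{Z}_{p_n-1}$; such a homomorphism exists precisely because $q_n \mid p_n-1$. Then $G_n$ is non-abelian, and since every nilpotent group of square-free order is the internal direct product of its cyclic Sylow subgroups (hence cyclic), $G_n$ is non-nilpotent.

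Next I would enumerate $L(G_n)$ by Sylow's theorems. Because $|G_n|=p_n q_n$ is square-free, every proper non-trivial subgroup has prime order $p_n$ or $q_n$. The Sylow $p_n$-subgroup is unique (hence normal); the number of Sylow $q_n$-subgroups divides $p_n$ and is congruent to $1$ modulo $q_n$, and since $G_n$ is non-abelian it must equal $p_n$. Summing the contributions to $\sigma_1$:
\[
\sigma_1(G_n) \;=\; \frac{1}{p_n q_n} \;+\; \frac{1}{q_n} \;+\; p_n\cdot\frac{1}{p_n} \;+\; 1 \;=\; 2 + \frac{1}{q_n} + \frac{1}{p_n q_n}.
\]

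As $q_n \to \infty$ (and therefore $p_n\to\infty$), this expression tends to $2$ from above, proving the convergence part of the theorem. The only real bookkeeping obstacle is the symbol $\searrow$: to ensure strict monotonicity I would choose, at step $n$, the prime $p_n$ large enough that
\[
\frac{1}{q_n}+\frac{1}{p_n q_n} \;<\; \frac{1}{q_{n-1}}+\frac{1}{p_{n-1} q_{n-1}},
\]
which is always possible since the strict inequality $q_n>q_{n-1}$ already produces a positive gap between $1/q_{n-1}$ and $1/q_n$ that dominates the auxiliary term $1/(p_n q_n)$ once $p_n$ is large. The heart of the argument is thus just the subgroup count, and there is no substantive difficulty beyond invoking Dirichlet and Sylow.
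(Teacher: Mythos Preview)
Your argument is correct and essentially identical to the paper's: both build non-abelian groups of order a product of two primes found via Dirichlet's theorem and compute $\sigma_1$ from the explicit subgroup lattice, the only cosmetic difference being that the paper labels the smaller prime $p_n$ (taking it to be the $n$-th prime) and the larger one $q_n$, whereas you swap the roles. If anything, you are more careful than the paper about the strict monotonicity implicit in the symbol $\searrow$.
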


Finally, we note that an interesting open problem is whether there is a constant $c\in(2,\infty)$ such that if $\sigma_1(G)<c$ then $G$ is solvable.

Most of our notation is standard and will not be repeated
here. Basic definitions and results on groups can be found in
\cite{3}. For subgroup lattice concepts we refer the reader to
\cite{2} and \cite{4}.

\section{Proof of the main results}

We start by proving two auxiliary results.

\begin{lemma}
    Let $G$ be a finite group. If $\sigma_1(G)<2+\frac{4}{|G|}$ then $G$ is nilpotent, while if $\sigma_1(G)=2+\frac{4}{|G|}$ then $G$ is nilpotent or $G\cong S_3$.
\end{lemma}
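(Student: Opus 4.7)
The plan is to argue by minimal counterexample, leaning on the monotonicity $\sigma_1(G) \geq \sigma_1(G/N)$ for $N \unlhd G$ recorded in the introduction. Suppose the lemma fails and let $G$ be a group of least order with $G$ non-nilpotent, $G \not\cong S_3$, and $\sigma_1(G) \leq 2 + \frac{4}{|G|}$. I will use minimality to cut down the structure of $G$ sharply and then finish by a subgroup count.

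\emph{Every non-trivial proper quotient of $G$ is nilpotent.} If $G/N$ were non-nilpotent and not isomorphic to $S_3$, minimality of $G$ would give $\sigma_1(G/N) > 2 + \frac{4}{|G/N|} \geq 2 + \frac{4}{|G|}$, contradicting $\sigma_1(G) \geq \sigma_1(G/N)$. If $G/N \cong S_3$, then $N \neq 1$ forces $|G| \geq 12$, and $\sigma_1(G) \geq \sigma_1(S_3) = 8/3 > 2 + 4/12 \geq 2 + \frac{4}{|G|}$, another contradiction.

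\emph{Unique minimal normal subgroup and trivial centre.} Two distinct minimal normal subgroups $N_1, N_2$ would intersect trivially and embed $G$ in $G/N_1 \times G/N_2$, which is nilpotent as a product of nilpotents; contradiction. So $G$ has a unique minimal normal subgroup $N$. Likewise, $Z(G) = 1$: otherwise $G/Z(G)$ is nilpotent by the previous step, whence $G$ itself is nilpotent by the classical fact ``$G/Z(G)$ nilpotent $\Rightarrow G$ nilpotent''.

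\emph{Structural case analysis} --- the main obstacle. We now have a unique minimal normal $N$, $Z(G) = 1$, and $G/N$ nilpotent. The case $N \cong T_1 \times \cdots \times T_k$ with $T_i$ isomorphic non-abelian simples is easily dispatched: $|G| \geq 60$ and the rich subgroup lattice of even one simple factor (e.g.\ $\sum_{H \leq A_5}|H| = 351$) forces $\sigma_1(G) \gg 2 + \frac{4}{|G|}$. The delicate case is $N$ elementary abelian of order $r^k$. Here, for any prime $q \neq r$ dividing $|G/N|$ --- and such $q$ must exist, since otherwise $G$ is an $r$-group, hence nilpotent --- every Sylow $q$-subgroup $Q$ of $G$ is non-normal: otherwise $Q \unlhd G$ would be a non-trivial normal subgroup containing the unique minimal normal $N$, contradicting $\gcd(|Q|,|N|) = 1$. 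Hence $n_q \geq q+1$, and the Sylow $q$-subgroups alone contribute $n_q|Q|$ to $\sum_{H \leq G}|H|$. Combining this with the contributions from $\{1\}$, $G$, and the subgroups of $N$ (of total size at least $1 + r^k$, larger if $k \geq 2$), one checks case by case that $\sum_{H \leq G}|H| > 2|G|+4$ --- equivalently $\sigma_1(G) > 2 + \frac{4}{|G|}$ --- except in the single configuration $r = 3$, $k = 1$, $|G/N| = q = 2$, which yields $G \cong S_3$ with exact equality $\sigma_1(G) = 8/3 = 2 + \frac{4}{6}$. This contradicts $G \not\cong S_3$ and completes the proof.
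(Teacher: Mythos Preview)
Your reductions via a minimal counterexample are correct and pleasant: monotonicity under quotients does force every proper quotient to be nilpotent, and from that $Z(G)=1$ and uniqueness of the minimal normal subgroup follow. The problem is the endgame. In the elementary-abelian case your displayed lower bound amounts to
\[
\sum_{H\le G}|H|\ \ge\ |G|+\sigma_1(N)\,|N|+n_q\,|Q|,
\]
and you then assert that ``one checks case by case'' that this exceeds $2|G|+4$ except for $S_3$. But $n_q|Q|\le |G|$ always (since $n_q=[G:N_G(Q)]$ and $Q\le N_G(Q)$), so your inequality reduces to $\sigma_1(N)\,|N|>4$, which fails for $|N|\in\{2,3\}$. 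The case $|N|=2$ is excluded by $Z(G)=1$, but $|N|=3$ is not, and nothing you have written rules out, say, $O_r(G)\supsetneq N$ or $|K|>2$ for the $r'$-complement $K$. One can push further (in fact $C_G(N)=O_r(G)$ and $K$ embeds in $GL_k(r)$), but this is real work you have not done; the phrase ``case by case'' hides a genuine argument. The non-abelian case is similarly under-argued: you cannot use $\sigma_1(G)\ge\sigma_1(N)$, since that inequality holds for quotients, not subgroups, and a crude count of subgroups of one simple factor does not automatically dominate $|G|$ when $N$ has many factors.

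Compare the paper's proof, which bypasses all structure theory with a single observation: a non-normal maximal subgroup $M$ has exactly $r=[G:M]$ conjugates, and these alone contribute $r|M|=|G|$ to the sum. Together with $1$ and $G$ that is already $2|G|+1$, so \emph{every remaining proper nontrivial subgroup} must have order at most $3$, and in the strict case there is exactly one such subgroup, of order $2$. From $L(G)=\{1,N,M_1,\dots,M_r,G\}$ with $|N|=2$ the contradiction is immediate. This is both shorter and gap-free; the moral is that counting conjugates of a non-normal maximal subgroup is the right first move, not a structural reduction on $N$.
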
\newpage

\begin{proof}
    Assume that $\sigma_1(G)<2+\frac{4}{|G|}$ and $G$ is not nilpotent, that is it contains a non-normal maximal subgroup $M$. Then $M$ coincides with its normalizer in $G$ and so it has
    exactly $r=|G:M|$ conjugates, say $M_1, M_2,..., M_r$. On the other hand, $L(G)$ cannot consist only of $1$, $G$ and $M_1, M_2,..., M_r$. Therefore there is $N\leq G$ such that $N\neq 1, G, M_1, M_2,..., M_r$. Since
    \[2+\frac{4}{|G|}>\sigma_1(G)\geq\frac{1+r|M|+|N|+|G|}{|G|}=2+\frac{|N|+1}{|G|}\,,\]it follows that $N$ is a normal subgroup of order $2$ and $L(G)\!=\!\{1, G, M_1, M_2,...,\newline M_r, N\}$. Then either $N\subset M_i,\, \forall\, i=1,2,...,r$, or $N\cap M_i=1,\, \forall\, i=1,2,...,r$. We infer that $G$ is either a $2$-group or a cyclic group of order $2p$ for some odd prime $p$, a contradiction.

    Assume now that $\sigma_1(G)=2+\frac{4}{|G|}$ and $G$ is not nilpotent. Then, under the above notation, we must have $|N|=3$ and thus $G$ is a group of order $3q$ for some prime $q$. Clearly, the conditions $r\mid 3$ and $r\equiv 1 \,({\rm mod}\, q)$ imply $r=3$ and $q=2$. Hence $G\cong S_3$, completing the proof.
\end{proof}

\begin{lemma}
    Let $G$ be a non-cyclic $p$-group. If $\sigma_1(G)<2+\frac{4}{|G|}$ then $G\cong\mathbb{Z}_2\times\mathbb{Z}_2$, while if $\sigma_1(G)=2+\frac{4}{|G|}$ then $G\cong\mathbb{Z}_3\times\mathbb{Z}_3$.
\end{lemma}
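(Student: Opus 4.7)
The plan is to split the argument according to whether $|G|=p^2$ or $|G|=p^n$ with $n\geq 3$, and to show the strict inequality $\sigma_1(G)>2+\frac{4}{|G|}$ in every case apart from the two exceptional groups listed.

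For $n=2$, I use that the only non-cyclic group of order $p^2$ is $\mathbb{Z}_p\times\mathbb{Z}_p$, whose lattice $L(G)$ consists of $\{1\}$, $G$, and exactly $p+1$ subgroups of order $p$. A direct calculation gives
\[
\sigma_1(G)=\frac{1}{p^2}+1+(p+1)\cdot\frac{1}{p}=2+\frac{p+1}{p^2}.
\]
Comparing $\frac{p+1}{p^2}$ with $\frac{4}{p^2}$ shows that strict inequality holds exactly when $p=2$ (giving $\mathbb{Z}_2\times\mathbb{Z}_2$), equality exactly when $p=3$ (giving $\mathbb{Z}_3\times\mathbb{Z}_3$), and the reverse strict inequality whenever $p\geq 5$. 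This disposes of all groups of order $p^2$.

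For $n\geq 3$, I would invoke the standard fact that a non-cyclic $p$-group $G$ has at least $p+1$ maximal subgroups, which follows from $G/\Phi(G)$ being elementary abelian of rank $\geq 2$. Since $\{1\}$, $G$, and these maximal subgroups are pairwise distinct elements of $L(G)$ when $n\geq 3$, summing their contributions gives
\[
\sigma_1(G)\geq\frac{1}{p^n}+1+(p+1)\cdot\frac{1}{p}=2+\frac{1}{p}+\frac{1}{p^n}.
\]
Checking $\frac{1}{p}+\frac{1}{p^n}>\frac{4}{p^n}$ reduces to $p^{n-1}>3$, which holds for every $p\geq 2$ and $n\geq 3$ since then $p^{n-1}\geq 4$. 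Thus $\sigma_1(G)>2+\frac{4}{|G|}$ throughout this range, ruling out all non-cyclic $p$-groups of order at least $p^3$.

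The only real choice in this proof is the counting strategy, and this is where I expect the main pitfall to lie. A natural first attempt would be to also exploit the dual lower bound of $p+1$ subgroups of order $p$, but that forces a separate treatment of the generalized quaternion $2$-groups, whose unique subgroup of order $2$ breaks the count. Restricting attention to maximal subgroups avoids this nuisance entirely and yields a uniform lower bound sufficient for the tail case $n\geq 3$, while the boundary case $n=2$ is then handled by the explicit computation above.
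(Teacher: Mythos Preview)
Your proof is correct. The core idea---counting maximal subgroups via the Frattini quotient $G/\Phi(G)$, then handling $|G|=p^2$ by direct computation---is the same one the paper uses, but your execution is noticeably cleaner. The paper splits the tail case according to the rank $k$ of $G/\Phi(G)$: for $k\geq 3$ it counts four layers of subgroups in $\mathbb{Z}_p^k$ to push $\sigma_1(G)$ above~$4$, and for $k=2$ with $n>2$ it adds an extra subgroup of order $p^{n-2}$ (namely $\Phi(G)$ itself) to obtain the contradiction $4\geq p^{n-1}+p^{n-2}+1$. You observe, correctly, that the bare count of $p+1$ maximal subgroups already suffices uniformly for all $n\geq 3$, since $p^{n-1}\geq 4>3$; this eliminates both the rank split and the auxiliary subgroup. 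Your remark about avoiding subgroups of order $p$ (and hence the generalized quaternion exception) is also a nice piece of hygiene that the paper does not need to confront because it works from the top of the lattice throughout.
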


\begin{proof}
    Let $|G|=p^n$ and $G/\Phi(G)\cong\mathbb{Z}_p^k$. Then $2\leq k\leq n$. Assume that $\sigma_1(G)\leq 2+\frac{4}{p^n}$ and denote by $a_{k,p}(i)$ the number of subgroups of order $p^i$ in $\mathbb{Z}_p^k$, $i=0,1,...,k$. If $k\geq 3$ then
    \begin{align*}
        \sigma_1(G)
        &\geq\frac{1}{p^n}\left(a_{k,p}(0)p^{n-k}+a_{k,p}(1)p^{n-k+1}+a_{k,p}(k-1)p^{n-1}+a_{k,p}(k)p^n\right)\\
        &= \frac{1}{p^k}\left(a_{k,p}(0)+a_{k,p}(1)p+a_{k,p}(k-1)p^{k-1}+a_{k,p}(k)p^k\right) \\
        &= \frac{1}{p^k}\left(1+\frac{p^k-1}{p-1}p+\frac{p^k-1}{p-1}p^{k-1}+p^k\right) \\
        &>\frac{1}{p^k}\left(1+4p^k\right)>4>2+\frac{4}{p^n}\,,
    \end{align*}contradicting our hypothesis. Consequently, $k=2$. For $n>2$ we infer that
    \[ 2+\frac{4}{p^n}\geq\sigma_1(G)\geq\frac{1+(p+1)p^{n-1}+p^{n-2}+p^n}{p^n}\,,\]which leads to
    \[ 4\geq p^{n-1}+p^{n-2}+1,\]a contradiction. Therefore $n=2$, i.e. $G\cong\mathbb{Z}_p\times\mathbb{Z}_p$. Moreover, we have
    \[ 2+\frac{4}{p^2}\geq\sigma_1(G)=\frac{1+(p+1)p+p^2}{p^2}\Longleftrightarrow 4\geq p+1\Longleftrightarrow p\in\{2,3\}.\]Thus $G\cong\mathbb{Z}_2\times\mathbb{Z}_2$ for
    $\sigma_1(G)<2+\frac{4}{p^2}$ and $G\cong\mathbb{Z}_3\times\mathbb{Z}_3$ for $\sigma_1(G)=2+\frac{4}{p^2}$\,, as desired.
\end{proof}

We are now able to prove our main results.

\begin{proof}[Proof of Theorem \ref{th:C1}]
\begin{itemize}
\item[{\rm a)}] Assume that $\sigma_1(G)<2+\frac{4}{n}$. Then $G$ is nilpotent by Lemma 4, which implies that it can be written as a direct product of its Sylow $p_i$-subgroups:
    \[ G\cong\prod_{i=1}^m G_i.\tag{$*$}\]For every $i=1,2,...,m$, we have
    \[ \sigma_1(G_i)\leq\sigma_1(G)<2+\frac{4}{n}\leq 2+\frac{4}{|G_i|}\,.\]By Lemma 5 it follows that either $G_i$ is cyclic or $G_i\cong\mathbb{Z}_2\times\mathbb{Z}_2$. We infer that either $G$ is cyclic or $G\cong\mathbb{Z}_2\times\mathbb{Z}_2\times\mathbb{Z}_{n'}$ for some odd positive integer $n'$. Suppose that $G$ is not cyclic. If $n'>1$ we obtain
    \[ 2+\frac{1}{n'}=2+\frac{4}{n}>\sigma_1(G)=\sigma_1(\mathbb{Z}_2\times\mathbb{Z}_2)\sigma_1(\mathbb{Z}_{n'})=\frac{11\sigma(n')}{4n'}\,,\]or equivalently
    \[ 11\sigma(n')<8n'+4\,,\]a contradiction. Thus $n'=1$ and $G\cong\mathbb{Z}_2\times\mathbb{Z}_2$.
\item[{\rm b)}] Assume that $\sigma_1(G)=2+\frac{4}{n}$ and $G\not\cong S_3$. Then $G$ is nilpotent by Lemma 4 and so it has a direct decomposition of type $(*)$. For $m\geq 2$ we have \[ \sigma_1(G_i)\leq\sigma_1(G)=2+\frac{4}{n}< 2+\frac{4}{|G_i|}\,,\, i=1,2,...,m,\]and again every $G_i$ is cyclic or isomorphic to $\mathbb{Z}_2\times\mathbb{Z}_2$ by Lemma 5. Therefore either $G$ is cyclic or $G\cong\mathbb{Z}_2\times\mathbb{Z}_2\times\mathbb{Z}_{n'}$ for some odd positive integer $n'$. In the second case the condition  $\sigma_1(G)=2+\frac{4}{n}$ leads to \[ 11\sigma(n')=8n'+4\,,\]a contradiction. For $m=1$ it follows that $G$ is a $p$-group, and consequently it is cyclic or isomorphic to $\mathbb{Z}_3\times\mathbb{Z}_3$ by Lemma 5. This completes the proof. \qedhere
\end{itemize}
\end{proof}

\begin{proof}[Proof of Theorem \ref{th:C2}]
    Let $(p_n)_{n\in\mathbb{N}}$ be the sequence of prime numbers. By Dirichlet's theorem we infer that for every
    $n\in\mathbb{N}$ there is a prime $q_n$ such that $p_n\mid q_n-1$. Let $G_n$ be the non-nilpotent group of order $p_nq_n$. This contains one subgroup of order $1$, $q_n$ subgroups of order $p_n$, one subgroup of order $q_n$, and one subgroup of order $p_nq_n$. Then
    \[\sigma_1(G_n)=\frac{1+p_nq_n+q_n+p_nq_n}{p_nq_n}=2+\frac{1+\frac{1}{q_n}}{p_n}\,,\]and clearly $\sigma_1(G_n)\searrow 2$ for $n\rightarrow\infty$, as desired.
\end{proof}

\vspace*{5ex}\small

\hfill
\begin{minipage}[t]{5cm}
Marius T\u arn\u auceanu \\
Faculty of  Mathematics \\
``Al.I. Cuza'' University \\
Ia\c si, Romania \\
e-mail: {\tt tarnauc@uaic.ro}
\end{minipage}

\end{document}